\theoremstyle{plain}
\newtheorem{theorem}{Theorem}[section]
\theoremstyle{definition}
\DeclareMathOperator*{\OmSum}{\mathlarger{\mathlarger{\Omega}}}
\newcommand{\tet}{\text{tet}_\beta}
\begin{document}

\title{Infinite Compositions and Complex Dynamics; Generalizing Schr\"{o}der and Abel Functions}

\author{James David Nixon\\
	JmsNxn92@gmail.com\\}

\maketitle

\begin{abstract}
Using infinite compositions, we solve the general equations $P(\lambda w) = p(w)f(P(w))$ for holomorphic functions $p$ and $f$. We describe the situations in which this equation is palpable; and their effectiveness at describing dynamical properties of the orbit $f^{\circ n}(z)$. We similarly make a change of variables to study a generalized form of the Abel equation, $F(s+1) = u(s)f(F(s))$. This paper is intended as a more in depth examination of work done previously in \cite{Nix5}.
\end{abstract}

\emph{Keywords:} Complex Analysis; Infinite Compositions; Complex Dynamics.\\

\emph{2010 Mathematics Subject Classification:} 30D05; 30B50; 37F10; 39B12; 39B32\\

\section{Introduction}\label{sec1}
\setcounter{equation}{0}

This paper will be a tad expansive; in that, we want to cover some of the work of John Milnor's book, \textit{Dynamics In One Complex Variable.} Except, we want to generalize the idea of the Fatou coordinate, and the Schr\"{o}der function. And, we only have a bit of space. This paper is intended as supplemental reading for our recent paper, \textit{The Limits of a Family; of Asymptotic Solutions to the Tetration Equation} \cite{Nix5}.

We want to envelop the same schema that Milnor uses in classifying complex dynamics. Whereupon, where Milnor focuses on Local Fixed Point Theory, we'll avoid the Fixed Point part, and only talk about a local theory. And where Milnor focuses on a global theory; we again will focus on varying equations than the coordinate changes Milnor describes.

To set the stage, we recall the three types of dynamics Milnor chooses to organize his study. These being,

\begin{eqnarray*}
f&:& \mathbb{D} \to \mathbb{D}\,\,\text{for}\,\, \mathbb{D} = \{z\in\mathbb{C}\,|\,|z|<1\}\\
f &:& \mathbb{C} \to \mathbb{C}\\
f &:& \widehat{\mathbb{C}} \to \widehat{\mathbb{C}}\,\,\text{for}\,\,\widehat{\mathbb{C}} = \mathbb{C} \cup \{\infty\}\\
\end{eqnarray*}

Where we assume all of these maps are holomorphic. To the trained mathematician, I have somewhat butchered an otherwise far more diverse setting; but this is the reduction of the three cases we'll focus on. Which is to say; for our purposes this rendition is well enough. In the first case, it results in a local fixed point theory. In the second case, it results in the global dynamics of an entire function. And the third case equates to rational maps on the Riemann Sphere.

In each of these spaces, we want to construct coordinate changes, which have a slightly different structure than Fatou coordinates, or Koenig's coordinate. Where in these cases,

\[
\alpha(f(z)) = f(z)+1\\
\]

Or,

\[
\Psi(f(z)) = \lambda \Psi(z)\\
\]

For appropriate functions; and these things are appropriately unique. Milnor describes a very complex renaissance painting of a how to do complex dynamics--where these 2 equations pop up consistently. 

In this paper, we're going to focus on slightly different equations; which have much of the same structure. However, these equations will be more esoteric. They will invariably require a dependence on $z$, other than $f$'s and $\Psi$'s and $\alpha$'s. They'll depend on $z$ like a multiplicative factor.

The benefit of doing such, is that there are far more exotic scenarios where these equations can be constructed. And that, the strongest manner of constructing these coordinate changes, is to rely on iterative behaviour near a fixed point. In the world the author is proposing, we don't need to do that. In the world of Infinite Compositions, this is supplemental.\\

The author again argues for the benefit of Infinite Compositions in the study of Functional Equations in the Complex Plane. He cites \cite{Nix,Nix2,Nix3,Nix4,Nix5}, as works he's already spent on infinite compositions and their manner of convergence. Most recently in \cite{Nix5} he implicitly used much of the theory he will use in this paper. Where that paper used the theory of infinite compositions to construct an Abel function of the exponential map $e^z$ that was real-valued. 

The author can no longer press himself to include an introduction to infinite compositions in every paper he writes. For that reason we assume the reader is familiar with $\OmSum$-notation. We no longer introduce it, and much of this paper will glaze over manipulations using $\OmSum$-notation.

\section{The Local Theory}

The local theory of complex dynamics is perhaps the easiest to translate to our scenario. It is the scenario where we get to see the avoidance of much of traditional complex dynamics, while still dealing with complex dynamics. That is, we don't need normality theorems on the orbits; though we do need normality theorems. It's just, these normality theorems were all derived in previous works, and have nothing to do with the Julia set or the Fatou set.

We may consider the local theory, as a place where $f$ (our function of interest), sends a domain to itself. This accounts for either a component of the Fatou set which maps to itself; or the Julia set (which maps to itself, but usually has trivial interior); or some combination of these ideas. Milnor treats the local theory as about a fixed point; we again, don't care about fixed points (so much as we care of a domain where $f$ sends to itself (where fixed points give us a good language at determining these domains)).

But I digress.

Let's take a function $f:\mathbb{D} \to \mathbb{D}$; and call it our function of interest. Let's take another function $p(w) : \mathbb{D} \to \mathbb{D}$ with $p(0) = 0$. Now, we don't care whether $f$ has fixed points, how many it has, whether it has a periodic cycle; we only care it maps $\mathbb{D} \to \mathbb{D}$. From these two functions, we want to construct a function $P_\lambda : \mathbb{D} \to \mathbb{D}$ such that; for $0 < |\lambda| < 1$,

\[
P_\lambda(\lambda^{-1} w) = p(w)f(P_\lambda(w))\\
\]

Finding this function is surprisingly simple. And further, showing holomorphy in $z$ and $\lambda$, is also surprisingly simple. We point the reader towards \cite{Nix2,Nix3,Nix5}, where this theorem was shown on each occasion. The general form of the theorem, we'll write below,

\begin{theorem}\label{thmA}
Let $\{H_j(s,z)\}_{j=1}^\infty$ be a sequence of holomorphic functions such that $H_j(s,z) : \mathcal{S} \times \mathcal{G} \to \mathcal{G}$ where $\mathcal{S}$ and $\mathcal{G}$ are domains in $\mathbb{C}$. Suppose there exists some $A \in \mathcal{G}$, such for all compact sets $\mathcal{N}\subset\mathcal{G}$, the following sum converges,

\[
\sum_{j=1}^\infty ||H_j(s,z) - A||_{z \in \mathcal{N},s \in \mathcal{S}} = \sum_{j=1}^\infty \sup_{z \in \mathcal{N},s \in \mathcal{S}}|H_j(s,z) - A| < \infty
\]

Then the expression,

\[
H(s) = \lim_{n\to\infty}\OmSum_{j=1}^n H_j(s,z)\bullet z = \lim_{n\to\infty} H_1(s,H_2(s,...H_n(s,z)))\\
\]

Converges uniformly for $s \in \mathcal{S}$ and $z \in \mathcal{N}$ as $n\to\infty$ to $H$, a holomorphic function in $s\in\mathcal{S}$, constant in $z$.
\end{theorem}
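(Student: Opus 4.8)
The plan is to prove that the partial compositions
\[
F_n(s,z) \;=\; \OmSum_{j=1}^n H_j(s,z)\bullet z \;=\; H_1\big(s,H_2(s,\dots,H_n(s,z))\big)
\]
form a uniformly Cauchy sequence on $\mathcal{S}\times\mathcal{N}$; uniform convergence then furnishes the limit $H$, holomorphy of $H$ in $s$ follows from Weierstrass's theorem (each $F_n(\cdot,z)$ is a composition of holomorphic functions, hence holomorphic in $s$), and constancy in $z$ drops out of the same estimates. The engine is a telescoping bound. Writing $\Phi_k(s,w)=H_1(s,H_2(s,\dots,H_k(s,w)))$, so that $F_n(s,z)=\Phi_n(s,z)$ while $F_{n+1}(s,z)=\Phi_n(s,H_{n+1}(s,z))$, and peeling off one further layer,
\[
F_{n+1}(s,z)-F_n(s,z) \;=\; \Phi_{n-1}\!\big(s,\,H_n(s,H_{n+1}(s,z))\big)\;-\;\Phi_{n-1}\!\big(s,\,H_n(s,z)\big).
\]
Since $H_n$ pushes everything into the ball of radius $\|H_n-A\|$ about $A$, the two second-arguments of $\Phi_{n-1}$ differ by at most $2\|H_n-A\|$. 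So if $\Phi_{n-1}$ obeys a Lipschitz estimate in its second variable with a constant $L$ \emph{independent of $n$ and $s$} on a fixed neighbourhood of $A$, the summability hypothesis yields $\sum_n\|F_{n+1}-F_n\|_{\mathcal{S}\times\mathcal{N}}<\infty$ at once, hence a uniformly Cauchy sequence.

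Everything therefore reduces to producing that uniform Lipschitz constant. I would fix $\rho>0$ with $\overline{B}(A,2\rho)\subset\mathcal{G}$ and apply the hypothesis to the compact set $\mathcal{K}=\overline{B}(A,2\rho)\cup\mathcal{N}$: choose $N_0$ so large that $\|H_j-A\|_{\mathcal{K}}<\rho$ for every $j\ge N_0$ (automatic, since these norms are summable hence tend to $0$). A short downward induction then shows that for all $n\ge m\ge N_0$ the tail composition $H_m(s,H_{m+1}(s,\dots,H_n(s,z)))$ stays inside $\overline{B}(A,\rho)$ for every $s\in\mathcal{S}$ and every $z\in\mathcal{K}$: the innermost map carries $\mathcal{K}$ to within $\rho$ of $A$, and each subsequent $H_j$ with $j\ge N_0$ maps $\overline{B}(A,\rho)$ back into $\overline{B}(A,\rho)$. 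Consequently, in the telescoping difference above with $n\ge N_0+1$, both second-arguments of $\Phi_{n-1}$ lie in $\overline{B}(A,\rho)$, and likewise $H_n(s,z),H_n(s,z')\in\overline{B}(A,\rho)$ for $z,z'\in\mathcal{N}$.

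To bound the Lipschitz constant of $\Phi_{n-1}$ on $\overline{B}(A,\rho)$ I would differentiate in the second variable by the chain rule: $\partial_2\Phi_{n-1}$ is the product of the factors $\partial_2 H_j$ evaluated along the nested orbit. For the indices $j\ge N_0$ the orbit values lie in $\overline{B}(A,\rho)$, so Cauchy's estimate on $\overline{B}(A,2\rho)$ gives $|\partial_2 H_j|\le\|H_j-A\|_{\mathcal{K}}/\rho<1$ there, whence the entire tail product $\prod_{j\ge N_0}$ is at most $1$, uniformly in $n$ and $s$. The remaining finitely many factors assemble into the Lipschitz constant of the \emph{fixed} finite composition $\Phi_{N_0-1}$ in its second variable over the \emph{fixed} compact $\overline{B}(A,\rho)\subset\mathcal{G}$, which is finite. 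Multiplying gives the uniform $L$, and with it $\|F_{n+1}-F_n\|_{\mathcal{S}\times\mathcal{N}}\le 2L\,\|H_n-A\|_{\mathcal{K}}$ for $n\ge N_0+1$, so $(F_n)$ converges uniformly on $\mathcal{S}\times\mathcal{N}$ to a function $H$; holomorphy of $H$ in $s$ is then Weierstrass's theorem. Constancy in $z$ comes from running the same Lipschitz estimate horizontally: $F_n(s,z)-F_n(s,z')=\Phi_{n-1}(s,H_n(\cdot,z))-\Phi_{n-1}(s,H_n(\cdot,z'))$ with $|H_n(\cdot,z)-H_n(\cdot,z')|\le 2\|H_n-A\|_{\mathcal{K}}\to0$, so $\|F_n(\cdot,z)-F_n(\cdot,z')\|_{\mathcal{S}}\to0$ and $H(s)$ is independent of $z\in\mathcal{N}$; since $\mathcal{N}$ is arbitrary, $H$ is constant in $z$ throughout $\mathcal{G}$.

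The one genuinely nontrivial point is the step that converts ``$H_j$ is uniformly close to $A$ on compacts'' into an estimate stable under composition: one must simultaneously imprison the nested iterates in a single compact ball about $A$ and keep the product of the derivatives $\partial_2 H_j$ bounded, and it is precisely the summability of $\sum_j\|H_j-A\|$ — not mere pointwise or locally uniform convergence $H_j\to A$ — that delivers both. A secondary technical nuisance is that Cauchy's estimate for the finitely many leading maps presupposes that their nested images sit in a compact subset of $\mathcal{G}$; here the hypothesis helps again, since it forces $\sup_{s\in\mathcal{S}}|H_j(s,w)-A|$ to be finite on every compact set, so each leading map has bounded image and $\Phi_{N_0-1}$ is a bona fide holomorphic map with bounded second-variable derivative on $\overline{B}(A,\rho)$, which is all the argument needs.
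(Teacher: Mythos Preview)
The paper does not prove this theorem: it is stated and immediately used as a black box, with the remark that it ``was shown on each occasion'' in the author's earlier papers \cite{Nix2,Nix3,Nix5}. So there is no in-paper argument to compare your proposal against.

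On its own merits your argument is the standard one and is essentially correct: trap the tail of the nested composition inside a fixed ball $\overline{B}(A,\rho)\subset\mathcal{G}$, use Cauchy's estimate there to make each tail factor satisfy $|\partial_2 H_j|\le\|H_j-A\|_{\mathcal{K}}/\rho<1$, telescope to get a uniformly Cauchy sequence, and invoke Weierstrass for holomorphy. The one soft spot is precisely the ``secondary technical nuisance'' you flag at the end. For the fixed finite head $\Phi_{N_0-1}$, having \emph{bounded} image (which the hypothesis does give) is not the same as having image \emph{compactly contained} in $\mathcal{G}$, and it is the latter that a uniform-in-$s$ Cauchy bound on $\partial_2\Phi_{N_0-1}$ actually requires: the intermediate values $u_{N_0-2},u_{N_0-3},\dots$ could in principle drift toward $\partial\mathcal{G}$ as $s$ ranges over the non-compact $\mathcal{S}$, and then the radii available for Cauchy's estimate collapse. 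When $\mathcal{G}=\mathbb{C}$ --- which is the only case the paper subsequently uses in Theorem~\ref{thmABL} --- there is no boundary and the issue evaporates; for a general domain $\mathcal{G}$ one would either add a short argument pinning the head images inside a fixed compact, or retreat to uniform convergence on compact subsets of $\mathcal{S}$, which is already enough for holomorphy of the limit.
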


And we use this theorem as a black box to construct $P_\lambda$. Let us call $\mathbb{D}_\rho = \{z \in \mathbb{D}\,|\,|z| \le \rho\}$ for $0 < \rho <1$. Then, for all $0 < \rho < 1$ the following sum converges,

\[
\sum_{j=1}^\infty ||p(\lambda^j w)f(z)||_{(\lambda,w,z)\in\mathbb{D}_\rho^3} < \infty\\
\]

This implies, by Theorem \ref{thmA}, the infinite composition,

\[
P_\lambda(w) = \OmSum_{j=1}^\infty p(\lambda^jw)f(z)\bullet z\\
\]

Converges to a holomorphic function for $|w|,|\lambda|<1$. And this function has the identity,

\begin{eqnarray*}
P_\lambda(\lambda^{-1} w) &=& \OmSum_{j=1}^\infty p(\lambda^j \lambda^{-1}w)f(z)\bullet z\\
&=& \OmSum_{j=1}^\infty p(\lambda^{j-1}w)f(z)\bullet z\\
&=& \OmSum_{j=0}^\infty p(\lambda^j w)f(z)\bullet z\\
&=& p(w)f(z) \bullet \OmSum_{j=1}^\infty p(\lambda^j w)f(z)\bullet z\\
&=& p(w)f(P_\lambda(w))\\
\end{eqnarray*}

And from this, we've constructed a mock Schr\"{o}der coordinate. We write this as a theorem.

\begin{theorem}[The Modified Schr\"{o}der Theorem]\label{thmSCH}
Let $f:\mathbb{D} \to \mathbb{D}$ be a holomorphic function. For an arbitrary holomorphic function $p:\mathbb{D} \to \mathbb{D}$ with $p(0) = 0$ there exists a holomorphic function $P_\lambda(w)$ for $\lambda,w \in \mathbb{D}$ such that,

\[
P_\lambda(\frac{w}{\lambda}) =p(w)f(P_\lambda(w))\\
\]

Where,

\[
P_\lambda(w) = \OmSum_{j=1}^\infty p(\lambda^jw)f(z)\,\bullet z = \lim_{n\to\infty} p(\lambda w)f(p(\lambda^2 w)f(...p(\lambda^nw)f(z)))\\
\]
\end{theorem}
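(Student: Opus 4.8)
\section*{Proof proposal}

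The plan is to read $P_\lambda$ off directly as the infinite composition $P_\lambda(w) = \OmSum_{j=1}^\infty p(\lambda^j w)f(z)\bullet z$ and then invoke Theorem \ref{thmA} as a black box for convergence and holomorphy, after which the functional equation falls out of a reindexing of the composition. First I would fix $0<\rho<1$ and restrict to $(\lambda,w,z)\in\mathbb{D}_\rho^2\times\mathbb{D}$, setting $H_j(s,z)=p(\lambda^j w)f(z)$ with parameter $s=(\lambda,w)$ and ambient domain $\mathcal{G}=\mathbb{D}$. Two things need checking before the theorem applies: that each $H_j$ actually maps into $\mathbb{D}$, and that the sup-sum condition holds with some basepoint $A$. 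The first is immediate because $p:\mathbb{D}\to\mathbb{D}$ and $f:\mathbb{D}\to\mathbb{D}$, so $|p(\lambda^j w)f(z)| = |p(\lambda^j w)|\,|f(z)| < 1$; a product of two disk elements stays in the disk. For the second, take $A=0$: since $p(0)=0$ and $p$ maps $\mathbb{D}$ into $\mathbb{D}$, the Schwarz lemma gives $|p(u)|\le|u|$, hence $|H_j(s,z)| \le |\lambda|^j|w| \le \rho^{j+1}$ uniformly on $\mathbb{D}_\rho^2\times\mathbb{D}$, and $\sum_j\rho^{j+1}<\infty$. Theorem \ref{thmA} then yields that $P_\lambda(w)$ converges uniformly on compacta to a function holomorphic in $s=(\lambda,w)$ and constant in the dummy variable $z$; exhausting $\mathbb{D}$ by the $\mathbb{D}_\rho$ gives holomorphy on all of $\mathbb{D}^2$.

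Next I would extract the functional equation purely formally inside $\OmSum$-notation: substituting $w\mapsto w/\lambda$ and shifting the index,
\begin{align*}
P_\lambda\!\left(\tfrac{w}{\lambda}\right) &= \OmSum_{j=1}^\infty p(\lambda^{j-1}w)f(z)\bullet z = \OmSum_{j=0}^\infty p(\lambda^{j}w)f(z)\bullet z \\
&= p(w)f(z)\bullet\OmSum_{j=1}^\infty p(\lambda^{j}w)f(z)\bullet z = p(w)f\big(P_\lambda(w)\big),
\end{align*}
where the middle step peels off the $j=0$ factor, which is precisely $p(w)f(\cdot)$ since $p(\lambda^0 w)=p(w)$. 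The only point here that is not purely cosmetic is the meaning of the left-hand side: $w/\lambda$ need not lie in $\mathbb{D}$, so $P_\lambda(w/\lambda)$ must be read as the shifted composition $\OmSum_{j=1}^\infty p(\lambda^{j-1}w)f(z)\bullet z$, which — by the same Schwarz bound, now $|p(\lambda^{j-1}w)|\le\rho^{j}$ — converges for every $(\lambda,w)\in\mathbb{D}^2$ and agrees with $P_\lambda$ evaluated at $w/\lambda$ wherever $|w|<|\lambda|$. Thus the identity should be understood as an identity between the genuine analytic object on the right and the (a priori larger-domain) composition on the left.

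I expect the only genuine obstacle to be this domain bookkeeping around $P_\lambda(w/\lambda)$, together with making precise the joint holomorphy in $(\lambda,w)$: Theorem \ref{thmA} as stated carries a single parameter $s$ in a planar domain $\mathcal{S}$, so strictly one wants its evident two-parameter analogue (or one feeds in $s=(\lambda,w)$ ranging over a bidisk, exactly as in \cite{Nix2,Nix3,Nix5}). Everything else — the Schwarz-lemma decay estimate, the codomain check, and the reindexing of $\OmSum$ — is routine and is in fact already carried out in the discussion preceding the statement.
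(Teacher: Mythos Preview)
Your proposal is correct and follows essentially the same route as the paper: verify the summability hypothesis of Theorem~\ref{thmA} on closed subdisks $\mathbb{D}_\rho$, conclude convergence and holomorphy of the infinite composition, and then obtain the functional equation by the index shift $j\mapsto j-1$. You supply more detail than the paper does---the Schwarz-lemma bound $|p(\lambda^j w)|\le\rho^{j+1}$, the explicit check that $H_j$ lands in $\mathbb{D}$, and the caveat about interpreting $P_\lambda(w/\lambda)$ and the two-parameter form of Theorem~\ref{thmA}---but the skeleton is identical to the argument the paper gives just before stating the theorem.
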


Where it is important now to put this in context to the local fixed point theory. If $f$ has an attracting fixed point at $0$, we can always construct a function $P$. And the extent to which we can construct $P$, depends entirely on where $f$ is defined, and where the sum,

\[
\sum_{j=1}^\infty p(\lambda^j w) f(z)\\
\]

Converges, which is possibly all of $\mathbb{C}$. However, we don't need there to be a fixed point on $\mathbb{D}$; if we were to switch this to an arbitrary simply connected domain; it's only required it maps the domain to itself.

Where here, the Julia set will not affect anything, nor approaching a different basin of attraction. The function $P$ will exist independently of this. So long as the various modified orbits $\OmSum_{j=1}^n p(\lambda^j w) f(z) \bullet z$ are defined (we do not leave the domain of holomorphy), everything is golden.

Constructing a Taylor series of $P_\lambda(w)$ in $w$ is also fairly routine. If $f$ and $p$ have well explained Taylor series, this proposes an alternative avenue. We simply take,

\[
P'_\lambda(\frac{w}{\lambda})/\lambda = p'(w)f(P_\lambda(w)) + p(w)f'(P_\lambda(w)) P'_\lambda(w)\\
\]

And iterate this relationship when taking derivatives to get our Taylor series in $w$ in a neighborhood of $0$; where we recall $p(0) = 0$ and $P(0) = 0$. We can also express a formula for the derivative using infinite compositions. This is a tad more exhausting, but doable.

\begin{eqnarray*}
P'_\lambda(w) &=& \OmSum_{j=1}^\infty \left(\lambda^j p'(\lambda^j w)f(P_\lambda(\lambda^j w)) + \lambda^jp(\lambda^jw)f'(P_\lambda(\lambda^j w)) z\right)\,\bullet z\\
\end{eqnarray*}

Which is an inductive process. This is a difficult idea to explain. But, we are solving a functional equation using infinite compositions; and the derivative of that solution satisfies a linear functional equation. And this process continues for further derivatives. As we don't really need this; it is left to the reader.

The Taylor Series method is much more efficient, and the intractable nature of infinite compositions (especially to the newly initiated), pushes the author away from this language. Though, both are rather equivalent, and we are just solving a recurrence relation in the Taylor series; though hidden beneath the $\OmSum$-notation. For this reason further, he will not broach this avenue too much.\\

The second way to view the local theory, is to solve the Abel equation. This is done almost offhandedly. We make the change of variables $w = \lambda^{-s}$, then our new function $F_\lambda(s)$ is holomorphic for $\Re(s) < 0$, and satisfies the equation,

\[
F_\lambda(s+1) = u(s)f(F_\lambda(s))\\
\]

Where $u(s) = p(\lambda^{-s})$. In this way, we can think of these solutions more intuitively. They are very much functions that are nearly fractional iterates of our function $f$. Wherein, it is nearly a Fatou coordinate, minus a multiplicative factor (which I call a convergent), which keeps all of our messes in-line.

As simple as the Local theory is, is also as unenlightening it is. The real power of this method lies in entire function theory--where appropriate choices of $p$ can be quite useful for discussing asymptotics at infinity. We move on to entire functions, then.

\section{Entire functions and the mock Abel equation}\label{sec3}
\setcounter{equation}{0}

In this section we'll focus on functions $f:\mathbb{C} \to \mathbb{C}$, where here the mock Abel equation is king. It also helps, in this case, to keep another well rounded space in our back pocket. Call $\mathfrak{E}$ the space of functions $u : \mathcal{G} \to \mathcal{G}$ such that,

\[
\sum_{j=1}^\infty ||u(s-j)||_{s\in\mathcal{S}} < \infty\\
\]

For all compact sets $\mathcal{S} \subset \mathcal{G}$. The domain $\mathcal{G}$ is nearly arbitrary, so long as left shifts are within it; as such the point at $-\infty$ is included as an accumulation point on the Riemann sphere. It is apt to think of $\mathcal{G}$ as a horizontal strip in $\mathbb{C}$ or something that looks like this. Then, we'll start with a theorem for our discussion.

\begin{theorem}[The Modified Abel Theorem]\label{thmABL}
Let $f:\mathbb{C} \to \mathbb{C}$ be an entire function. Let $\mathcal{G} \subseteq \mathbb{C}$ be a domain in which $s \in \mathcal{G} \Rightarrow s-1 \in \mathcal{G}$. Suppose $u : \mathcal{G} \to \mathbb{C}$ and for all compact sets $\mathcal{S} \subset \mathcal{G}$,

\[
\sum_{j=1}^\infty ||u(s-j)||_{s\in\mathcal{S}} < \infty\\
\]

Then there exists a function $F:\mathcal{G} \to \mathbb{C}$ such that,

\[
F(s+1) = u(s)f(F(s))\\
\]

Where,

\[
F(s) = \OmSum_{j=1}^\infty u(s-j)f(z)\,\bullet z\\
\]
\end{theorem}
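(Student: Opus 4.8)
The plan is to invoke Theorem \ref{thmA} as a black box, in the same way it was used above to build $P_\lambda$, once the domains are matched up correctly. The one point of care is that the ``target'' domain of Theorem \ref{thmA} should here be all of $\mathbb{C}$ --- this is exactly where entirety of $f$ is used --- while the parameter variable plays the role of $s$ and the inner variable the role of $z$. Concretely, set $H_j(s,z) = u(s-j)f(z)$. Since $\mathcal{G}$ is closed under $s\mapsto s-1$, for every $s\in\mathcal{G}$ and every $j\ge1$ the point $s-j$ again lies in $\mathcal{G}$, so each $H_j$ is holomorphic on $\mathcal{S}\times\mathbb{C}$ for any subdomain $\mathcal{S}\subseteq\mathcal{G}$.

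First I would verify the summability hypothesis of Theorem \ref{thmA} with $A=0$. For compact $\mathcal{N}\subset\mathbb{C}$ and compact $\mathcal{S}\subset\mathcal{G}$ the multiplicative bound $||H_j(s,z)||_{z\in\mathcal{N},\,s\in\mathcal{S}} \le ||f||_{\mathcal{N}}\cdot||u(s-j)||_{s\in\mathcal{S}}$ holds, where $||f||_{\mathcal{N}}<\infty$ because $f$ is entire and $\sum_{j\ge1}||u(s-j)||_{s\in\mathcal{S}}<\infty$ is the standing hypothesis on $u$. Thus Theorem \ref{thmA} applies over any subdomain of $\mathcal{G}$ with compact closure, and produces the holomorphic limit
\[
F(s)=\OmSum_{j=1}^\infty u(s-j)f(z)\,\bullet z,
\]
independent of $z$. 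Because this limit does not depend on which exhausting subdomain was used, the local pieces agree on overlaps and patch together to a single holomorphic $F:\mathcal{G}\to\mathbb{C}$.

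Next I would establish the functional equation by the reindexing trick already used for Theorem \ref{thmSCH}:
\begin{align*}
F(s+1) &= \OmSum_{j=1}^\infty u(s+1-j)f(z)\,\bullet z = \OmSum_{j=0}^\infty u(s-j)f(z)\,\bullet z\\
&= u(s)f(z)\,\bullet\OmSum_{j=1}^\infty u(s-j)f(z)\,\bullet z = u(s)f(F(s)),
\end{align*}
where the second step is the substitution $j\mapsto j-1$ (valid since $s-(j-1)\in\mathcal{G}$ for $j\ge1$), the third is the extraction of the leading composand from a convergent infinite composition, and the fourth substitutes $F(s)$ for $z$ in $u(s)f(z)$. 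It is worth remarking that the left-hand side here is genuinely extending $F$ to $\mathcal{G}\cup(\mathcal{G}+1)$, the identity being asserted wherever both $s$ and $s+1$ are in sight.

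I do not anticipate a genuine obstacle. All of the substance lies in the set-up: recognizing that the codomain must be taken as $\mathbb{C}$ so that $||f||_{\mathcal{N}}$ is finite, using the shift-invariance of $\mathcal{G}$ to keep every $u(s-j)$ well defined, and then carrying out the index shift together with the ``pull the first factor out'' identity for $\OmSum$-notation, which is a standard fact from \cite{Nix,Nix2,Nix3}. The only mild technical points --- the compact exhaustion/gluing and the precise domain on which the Abel identity is valid --- are routine.
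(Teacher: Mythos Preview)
Your proposal is correct and follows essentially the same route as the paper: the paper's proof simply notes that for all compact $\mathcal{S}\subset\mathcal{G}$ and $\mathcal{K}\subset\mathbb{C}$ one has $\sum_{j\ge1}\|u(s-j)f(z)\|_{s\in\mathcal{S},\,z\in\mathcal{K}}<\infty$ and invokes Theorem~\ref{thmA}. You have supplied the same argument with more of the details spelled out (the multiplicative bound, the reindexing to extract the functional equation), none of which departs from the paper's approach.
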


\begin{proof}
This will entirely follow from Theorem \ref{thmA}. For all compact sets $\mathcal{S} \subset \mathcal{G}$ and all compact sets $\mathcal{K}\subset\mathbb{C}$ we have that,

\[
\sum_{j=1}^\infty ||u(s-j)f(z)||_{s\in\mathcal{S}, z \in \mathcal{K}}  <\infty\\
\]

Which concludes the proof.
\end{proof}

The author would like to enter two functions in which this process works--and is elegant. The first version we'll take is the exponential version. For this, we just set $u=e^s$. And we'll call the function,

\[
F_{\exp}(s) = \OmSum_{j=1}^\infty e^{s-j}f(z)\,\bullet z\\
\]

Which satisfies,

\[
F_{\exp}(s+1) = e^s f(F_{\exp}(s))\\
\]

These functions were used gratuitously in \cite{Nix2}--but not in a holomorphic setting. They were used to construct a smooth sequence of functions $\{h_n\}_{n=1}^\infty$ such that $h_{n+1}(x+1) = h_n(h_{n+1}(x))$ with the initial condition $h_1(x) = e^x$. 

Using the exponential function is very beneficial for very fast growing functions, as it over estimates the dynamics. This is to mean, that if we were to solve the Abel equation $f(G(s)) = G(s+1)$, then our function $F_{\exp}$ will inherently grow much faster--there will be an exponential component attached to it. 

You might ask, why would we want to over-estimate? The reason we did it, was so that the inverse operation $f^{-1}$ is greater than $x$. Which would mean $f^{-1}(F_{\exp}(x+1)) > F_{\exp}(x)$. This was used in a very convenient manner in \cite{Nix2}--which can't really be summarized here. But it constructed smooth hyper-operators.\\

The second type of function $u$ is a tad more beneficial in complex dynamics. This function serves to describe the dynamics of the function $f$ at $\infty$. We will write a quick theorem here, which highlights this. But first, we'll introduce it. Consider the family of logistic maps (sort of),

\[
u_\beta(s) = \frac{1}{e^{-\beta s} + 1}\\
\]

Where here, $\Re \beta > 0$. This function is holomorphic for $\beta s \neq (2 k +1)\pi i$ for $k\in \mathbb{Z}$. Then, we can develop the function,

\[
F_\beta(s) = \OmSum_{j=1}^\infty \frac{f(z)}{e^{\beta(j-s)} + 1}\,\bullet z\\
\]

Which is holomorphic for $\Re(\beta) > 0$ and $\beta(j-s) \neq (2k+1)\pi i$. In which,

\[
F_\beta(s+1) = \frac{f(F_\beta(s))}{e^{-\beta s} + 1}\\
\]

In \cite{Nix5} we used these functions to construct an inverse Abel function for $f(z) = e^z$. In the general case, it may be difficult to do this unilaterally. But, we can still describe that this solves the asymptotic Abel equation; given our function $f$ has certain growth conditions.

\begin{theorem}
Suppose $|f(u)|/|f'(u)| < b$ for large $u$ and some $b \in \mathbb{R}^+$. The function $F_\beta$ satisfies the asymptotic Abel equation,

\[
f^{-1}(F_\beta(s+1)) - F_\beta(s) = \mathcal{O}(e^{-\beta s})\,\,\text{as}\,\,|s|\to\infty\,\,\text{while}\,\, \Re(\beta s) > 0\\
\]

wherever $F_\beta(s) \to \infty$ and $f(F_\beta(s)) \to \infty$.
\end{theorem}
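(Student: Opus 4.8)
The plan is to read the estimate straight off the functional equation $F_\beta(s+1) = f(F_\beta(s))/(e^{-\beta s}+1)$. Put $\epsilon = \epsilon(s) = e^{-\beta s}$, so that $|\epsilon| = e^{-\Re(\beta s)} < 1$ in the region of interest, and
\[
F_\beta(s+1) - f(F_\beta(s)) \;=\; -\,\frac{\epsilon}{1+\epsilon}\,f(F_\beta(s)).
\]
Since we restrict to the set where $F_\beta(s)\to\infty$, the hypothesis $|f(u)|/|f'(u)| < b$ applies at $u = F_\beta(s)$ for $|s|$ large; in particular $f'(F_\beta(s))\neq 0$, so $f$ is biholomorphic from a neighbourhood of $F_\beta(s)$ onto a neighbourhood of $f(F_\beta(s))$, and I take $f^{-1}$ to be the branch of the inverse pinned by $f^{-1}(f(F_\beta(s))) = F_\beta(s)$ --- this is the only branch for which comparing $f^{-1}(F_\beta(s+1))$ with $F_\beta(s)$ is meaningful, as $F_\beta(s+1)$ is a small relative perturbation of $f(F_\beta(s))$.

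Next I would express the difference as an integral of $1/f'$. Along the straight segment $\zeta(t) = (1-t)\,f(F_\beta(s)) + t\,F_\beta(s+1)$, $t\in[0,1]$, which has length $\dfrac{|\epsilon|}{|1+\epsilon|}\,|f(F_\beta(s))|$, the chain rule gives
\[
f^{-1}(F_\beta(s+1)) - F_\beta(s) \;=\; -\,\frac{\epsilon}{1+\epsilon}\,f(F_\beta(s))\int_0^1 \frac{dt}{f'\!\big(f^{-1}(\zeta(t))\big)}.
\]
Here $\zeta(t) = f(F_\beta(s))\cdot\dfrac{1+(1-t)\epsilon}{1+\epsilon}$, so $|\zeta(t)| \ge \tfrac12|f(F_\beta(s))|$ once $|\epsilon|$ is small, and $f^{-1}(\zeta(t))$ lies in the small neighbourhood of $F_\beta(s)$, hence is again large; thus the hypothesis applied at $u = f^{-1}(\zeta(t))$ yields $|f'(f^{-1}(\zeta(t)))| > |f(f^{-1}(\zeta(t)))|/b = |\zeta(t)|/b \ge |f(F_\beta(s))|/(2b)$. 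Feeding this into the integral,
\[
\big|f^{-1}(F_\beta(s+1)) - F_\beta(s)\big| \;\le\; \frac{|\epsilon|}{|1+\epsilon|}\,|f(F_\beta(s))|\cdot\frac{2b}{|f(F_\beta(s))|} \;=\; \frac{2b\,|\epsilon|}{|1+\epsilon|},
\]
which is $\mathcal{O}(e^{-\beta s})$ with implied constant of order $b$. That is the entire computation.

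The step I expect to be the real obstacle is the one used twice above: that the branch of $f^{-1}$ fixed by $f^{-1}(f(F_\beta(s)))=F_\beta(s)$ continues holomorphically over the whole segment $\zeta([0,1])$, with values there remaining inside the region where $|f(u)| < b|f'(u)|$, and that this holds uniformly as $|s|\to\infty$. Non-vanishing of $f'(F_\beta(s))$ alone only gives a disc of local injectivity whose radius could, a priori, shrink with $s$ faster than the segment does. I would close this gap either by a Koebe-type distortion estimate --- on a disc $D(F_\beta(s),r)$ on which $f$ is injective, the image contains $D(f(F_\beta(s)), \tfrac14|f'(F_\beta(s))|r)$, and $|f(F_\beta(s))| < b|f'(F_\beta(s))|$ lets us engulf the segment with $r \asymp b|\epsilon|$, which is legitimate for $|s|$ large provided $f$ has no critical points in such shrinking discs about the points $F_\beta(s)$ --- or by a bootstrap, noting the bound is trivially $\mathcal{O}(1)$ and that substituting it back tightens the relevant scale down to $\mathcal{O}(\epsilon)$. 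This is exactly the role of the proviso \emph{``wherever $F_\beta(s)\to\infty$ and $f(F_\beta(s))\to\infty$''}: it confines the argument to a part of the plane where $f$ behaves like the exponential-type maps this construction is modelled on and where the inverse branch is unambiguous; granting that, the remaining pieces --- the integral identity, the lower bound on $|f'|$, and the passage to $\mathcal{O}(e^{-\beta s})$ --- are routine.
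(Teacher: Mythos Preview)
Your argument is correct and follows the same route as the paper: use the functional equation to write $F_\beta(s+1)=f(F_\beta(s))/(1+e^{-\beta s})$, linearize $f^{-1}$ about $f(F_\beta(s))$, and observe that the resulting error is $\mathcal{O}\big(e^{-\beta s}\,f(u)/f'(u)\big)=\mathcal{O}(e^{-\beta s})$ by the hypothesis. The paper does this in one line via a first-order Taylor expansion of $f^{-1}$; you carry out the same linearization rigorously with the integral $\int_0^1 dt/f'(f^{-1}(\zeta(t)))$ and then bound the integrand using $|f'(u)|>|f(u)|/b$ along the segment. Your discussion of the branch of $f^{-1}$ and the Koebe-type justification for its continuation over the short segment is additional care that the paper simply omits; it does not change the underlying idea.
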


\begin{proof}
Since $F_\beta(s+1) = \frac{f(F_\beta(s))}{e^{-\beta s}+1}$; we need only a theorem that,

\[
f^{-1}(\frac{f(u)}{e^{-\beta s}  +1}) - u = \mathcal{O}(e^{-\beta s})
\]

Using a Taylor expansion, we have an expression for the error term,

\[
\mathcal{O}(\frac{f(u)e^{-\beta s}}{f'(u)})\\
\]

By our assumption, we know that $|f(u)|/|f'(u)| < b$. This concludes the proof.
\end{proof}

This theorem is very beneficial if we know that $F_\beta \to \infty$ and $f(\infty) \to \infty$, in some way or manner. In \cite{Nix5}, we discussed $f(z) = e^z$, in which the Julia set is all of $\mathbb{C}$; and so we should expect its orbits to be unbounded. If we pick well enough, or work on the right paths, than iterates will be unbounded. With the case of $f(z) = e^z$ the function $f(F_\beta(s))/F_\beta(s+1) = 1 + \mathcal{O}(e^{-\beta s})$, can be interpreted in the right half plane of $s$ (Just like an Abel equation); with a fixed point at $\infty$.

In general circumstances--this isn't as easy. But if we think of a set $s \in \mathcal{A}$, such $s \to \infty$; and $f(s) \in \mathcal{A}$. Then, the implicit solution $F_\beta(s) + v(s)$ to the equation $F_\beta(s) + v(s) = f(F_\beta(s-1) + v(s-1)) + \mathcal{O}(e^{-\beta s})$, is solvable; given we have $|f(u)|/|f'(u)| < b$ for some $b \in \mathbb{R}^+$.

We can develop a series $v_n$ by continuing this idea,

\[
f^{\circ -n}(F_\beta(s+n)) = F_\beta(s) + v_n(s)\\
\]

Which, follows inductively on $n$. Upon which, it means that the orbits of $f$ eventually look like translations in $F_\beta$--at least for large $|s|$; and how close they look to orbits can be controlled by $v_n$. This is such a strong asymptotic relationship; repeated derivatives will also abide by these rules. Where, now trying to construct an Abel function $f(A(z)) = A(z+1)$--we can at least expect a similar asymptotic relationship.

In \cite{Nix5} this was emphasized with our solution $\tet$ of $f(z)=e^z$; which was constructed from this asymptotic relationship. The author isn't sure of what functions $f$ this method necessarily generalizes to; but the general limit formula is $f^{\circ -n}F(s+n)$ for an appropriate $F(s)$ depending on what kind of behaviour we want.\\

And from this point, we can change back into the local coordinate. We write,

\[
P_\lambda(w) = \OmSum_{j=1}^\infty \frac{wf(z)}{\lambda^{-j} + w}\,\bullet z\\
\]

This expression is satisfied for $w \neq -\lambda^{-j}$; and thus $P_\lambda(w)$ is holomorphic for $\mathbb{C}^\times = \{w \in \mathbb{C}\,|\, w \neq -\lambda^{-j}\}$. The function $P_\lambda$ satisfies the \textit{almost}-Schr\"{o}der equation:

\[
P_\lambda(\frac{w}{\lambda}) = \frac{w}{w+1}f(P_\lambda(w))\\
\]

Which exists as the change of variables, $\lambda = e^{-\beta}$ and $\log(w)/\beta = s$ and $P_\lambda(w) = F_\beta(s)$. The existence and proof of holomorphy of this thing, is handled by The Modified Schr\"{o}der Theorem \ref{thmSCH} or The Modified Abel Theorem \ref{thmABL}. We can write this asymptotic in the local scenario;

\[
f^{-1}(P_\lambda(\frac{w}{\lambda}))-P_\lambda(w) =  \mathcal{O}(w^{-1})\\
\]

Such is such, that now, we get a much nicer theorem:

\begin{theorem}[The Good Asymptotics Theorem]\label{thmGA}
Assume $|f(u)|/|f'(u)| < b$ to large $u$ and some $b \in \mathbb{R}^+$. The function $P_\lambda(w) = F_\beta(s)$ satisfies the asymptotics,

\[
f^{-1}(P_\lambda(\frac{w}{\lambda}))-P_\lambda(w) =  \mathcal{O}(w^{-1})\\
\]

for all $P_\lambda(w/\lambda) \to \infty$ and $f(P_\lambda(w)) \to \infty$.
\end{theorem}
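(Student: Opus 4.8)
The plan is to reduce this statement to the previously established asymptotic, namely the theorem stating that $f^{-1}(F_\beta(s+1)) - F_\beta(s) = \mathcal{O}(e^{-\beta s})$ as $|s| \to \infty$ with $\Re(\beta s) > 0$, and then simply transport everything through the change of variables $\lambda = e^{-\beta}$, $s = \log(w)/\beta$, $P_\lambda(w) = F_\beta(s)$. Under this substitution, $s+1$ corresponds to $w/\lambda$ (since $\log(w/\lambda)/\beta = (\log w - \log \lambda)/\beta = s - (-\beta)/\beta = s+1$), so $F_\beta(s+1) = P_\lambda(w/\lambda)$ and $F_\beta(s) = P_\lambda(w)$, making the left-hand sides literally the same object written in two coordinate systems.

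First I would note that the growth hypothesis $|f(u)|/|f'(u)| < b$ for large $u$ is exactly the hypothesis of the earlier asymptotic Abel theorem, so that theorem applies verbatim and yields $f^{-1}(P_\lambda(w/\lambda)) - P_\lambda(w) = \mathcal{O}(e^{-\beta s})$ on the appropriate region, provided the ranges $P_\lambda(w/\lambda) \to \infty$ and $f(P_\lambda(w)) \to \infty$ hold (which are precisely the stated hypotheses here, translated into $w$-coordinates). Second, I would translate the error term: since $e^{-\beta s} = e^{-\beta \cdot \log(w)/\beta} = e^{-\log w} = w^{-1}$, the bound $\mathcal{O}(e^{-\beta s})$ becomes $\mathcal{O}(w^{-1})$ on the nose. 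Third, I would check that the asymptotic regime matches: $\Re(\beta s) > 0$ together with $|s| \to \infty$ corresponds, under $\beta s = \log w$, to $\Re(\log w) = \log|w| > 0$, i.e. $|w| > 1$ with $|w| \to \infty$ (or more precisely along whatever paths make $P_\lambda(w/\lambda)$ and $f(P_\lambda(w))$ escape to infinity), so the limiting process $w^{-1} \to 0$ is the honest content of $\mathcal{O}(w^{-1})$.

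The only genuine subtlety — the part I would be most careful about — is bookkeeping around the branch of $\log w$ and the precise shape of the region in the $w$-plane that corresponds to $\{\,|s|\to\infty,\ \Re(\beta s)>0\,\}$; this is a routine but slightly fiddly conformal-change-of-variables check, and one must make sure the exceptional points $w = -\lambda^{-j}$ (where $P_\lambda$ fails to be holomorphic) are not encountered, equivalently that the strip-type domain $\mathcal{G}$ on which $F_\beta$ lives pulls back correctly. Since $P_\lambda$ and $F_\beta$ are by construction the same function in different coordinates, and the error-term identity $e^{-\beta s} = w^{-1}$ is exact, there is essentially no analytic obstacle beyond invoking Theorem stated above; the proof is a two-line corollary.

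\begin{proof}
Recall the change of variables $\lambda = e^{-\beta}$, $s = \log(w)/\beta$, under which $P_\lambda(w) = F_\beta(s)$ and $P_\lambda(w/\lambda) = F_\beta(s+1)$, since $\log(w/\lambda)/\beta = s + 1$. The hypothesis $|f(u)|/|f'(u)| < b$ for large $u$ is precisely the hypothesis of the asymptotic Abel theorem proved above, which therefore gives
\[
f^{-1}(F_\beta(s+1)) - F_\beta(s) = \mathcal{O}(e^{-\beta s})\,\,\text{as}\,\,|s|\to\infty\,\,\text{while}\,\,\Re(\beta s) > 0,
\]
valid wherever $F_\beta(s) \to \infty$ and $f(F_\beta(s)) \to \infty$, i.e.\ wherever $P_\lambda(w/\lambda) \to \infty$ and $f(P_\lambda(w)) \to \infty$. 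Now $e^{-\beta s} = e^{-\log w} = w^{-1}$, and the region $\{\,\Re(\beta s) > 0\,\}$ corresponds to $\{\,\log|w| > 0\,\} = \{\,|w| > 1\,\}$, on which $|w|\to\infty$ realizes $|s|\to\infty$. Substituting $P_\lambda(w) = F_\beta(s)$ and $P_\lambda(w/\lambda) = F_\beta(s+1)$ into the displayed asymptotic and rewriting the error term yields
\[
f^{-1}(P_\lambda(\tfrac{w}{\lambda})) - P_\lambda(w) = \mathcal{O}(w^{-1}),
\]
as claimed.
\end{proof}
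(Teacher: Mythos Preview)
Your proposal is correct and follows precisely the route the paper takes: the theorem is stated immediately after the paper introduces the change of variables $\lambda = e^{-\beta}$, $s = \log(w)/\beta$, $P_\lambda(w) = F_\beta(s)$ and notes that the asymptotic $\mathcal{O}(e^{-\beta s})$ becomes $\mathcal{O}(w^{-1})$, with no separate proof given. Your write-up simply makes this change of variables explicit, which is exactly what the paper intends.
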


The author would like the reader to visualize this theorem at its fullest. For this reason, call $\Psi^{-1}$ the inverse Schr\"{o}der function of $f$. This means that $f(\Psi^{-1}(z)) = \Psi(Lz)$. Then, choose a path $w \in \gamma$, such that $Lw \in \gamma$, which tends to $\infty$ in which,

\[
\Psi^{-1}(w) \to \infty
\]

Then,

\[
f^{-1}(\Psi^{-1}(Lw)) - \Psi^{-1}(w) = 0\\
\]

Which is just Schr\"{o}der's equation. The Good Asymptotic Theorem \ref{thmGA}, is a way of having Schr\"{o}der's equation at infinity. Alas, as we approach infinity in a certain manner, we can get closer and closer to solving Schr\"{o}der's equation; but we have a variation in the multiplier $\lambda$.

Nonetheless, from this viewpoint, we have a good view of the asymptotic behaviour of the orbit $f^{\circ n}$. In that, the error drops off well enough for $f^{\circ -n}(P_\lambda(\frac{w}{\lambda^n}))$ to be normal. If this sequence of functions is a meaningful thing, the limit is discoverable.

Now in this scenario, we chose the rational function $p(w) = \frac{w}{1+w}$. If we choose different rational functions, we can describe far more complex relationships at $\infty$; and at alternative fixed points. The function $p(w) = \frac{w}{1+w}$ is valuable because it satisfies $p(\infty) = 1$ on the Riemann sphere. Upon which, we enter into the discussion of dynamics on the Riemann sphere.

\section{The Riemann sphere and a not so familiar view of infinite compositions.}

For this section $f : \mathbb{C} \to \mathbb{C}$, but $p: \widehat{\mathbb{C}} \to \widehat{\mathbb{C}}$ while $p$ does not necessarily fix $0$. Instead, we'll say that $p(\infty) = 0$. Then, we describe the function,

\[
P_\lambda(w) = \OmSum_{j=1}^\infty p(\lambda^{-j} w)f(z)\,\bullet z\\
\]

The purpose of this section is to say, that since $p$ is a rational function; it is well-behaved at infinity. \`{A} la, car, $f(\infty) = \infty$ some where (by Picard, Liouville), we can regulate $f(\infty)$ using $p(\infty)$. This means that, $P_\lambda$ is holomorphic everywhere $p(\lambda^{-j} w)$ doesn't have singularities in $w$ for each $j$.

The function $P_\lambda(w)$ is holomorphic on $\mathbb{C}$ minus a set of countable points. And for some domain $\mathbb{E}$ we have, $w
\in \mathbb{E} \,\Rightarrow\,\lambda w \in \mathbb{E}$; and $P_\lambda(w\lambda^n) \to \infty$ for $w \in \mathbb{E}$ and $n\to\infty$. Which is a consequence of $f$ being an entire function with some point of divergence.

This serves to describe a coordinate $w$ such that $w+\mathcal{O}(\frac{1}{w})\mapsto z$. Where iterated maps $f^{\circ n}(z) \mapsto \lambda^{n}w$; at least at the point $w =0,\,z = \infty$. The value $\infty$ is, again, interpreted as unbounded. We don't refer to its growth explicitly. Just that there is growth.

In \cite{Nix5} we used a change of variables to describe the orbits of $f(z) = e^z$ by mapping the singularity to $0$. The manner we did this is to take $p(w) = \frac{1}{w + 1}$. Then the function of interest is,

\[
P_\lambda(w) = \OmSum_{j=1}^\infty \frac{f(z)}{\lambda^{-j} w + 1}\,\bullet z\\
\]

The point where $w=0$ is a point of divergence of the infinite composition. And $P_\lambda(w)$ is holomorphic on $w \neq -\lambda^j,0$. But how it diverges is the key. The map $P_\lambda(\lambda w) = \frac{f(P_\lambda(w))}{w+1}$ which at $0$, we can guess that $\infty = P_\lambda(0) = f(\infty)$. Or rather, that our function will diverge at $0$ how $f$ diverges at $\infty$.

This manner of viewing things, coupled with The Good Asymptotics Theorem \ref{thmGA}, gives us a way of constructing a Schr\"{o}der function in a neighborhood of $0$. We write this,

\[
f^{-1} P_\lambda(\lambda w) - P_\lambda(w) = \mathcal{O}(w)\\
\]

Which describes an asymptotic equation at $w=0$; so long as we take an appropriate path $w \to 0$. In this space, we are taking the contraction mapping $\mathcal{T} h(w) = h(\lambda w)$ and the inverse contraction, $h \mapsto f^{-1}(h(w))$. Having good control over this operation allows us to consider,

\[
f^{\circ -n} (P_\lambda(\lambda^n w)) - P_\lambda(w) = \mathcal{O}(w)\\
\]

Which, the iterate converges well enough--at least in the $f(z) = e^z$ scenario. This allows us to eventually solve the Schr\"{o}der equation; and it does so with no mention of a fixed point needed. It is constructed at the point $\infty$; where we make the odd claim that $f(\infty) = \infty$--where this is not interpreted in the usual manner of a point on the Riemann Sphere. But rather, interpreted as unbounded, non-normal convergence. Which is simply Picard's theorem at $\infty$; where either we approach infinity, or we oscillate between every value in the complex plane at $\infty$. 

In no manner is it necessary for there to be a fixed point at $\infty$ of $f$; simply that some growth condition exists. Which like, for $e^z$; its orbits are dense in $\mathbb{C}$; and are so unbounded; and well enough, the accumulation points for almost all $z \in \mathbb{C}$ of the orbit $e^z$ is the orbit at $0$--which diverges to infinity very fast.

So, if the function,

\[
v_n(w) = f^{\circ -n} P_\lambda(\lambda^n w) - P_\lambda(w)\\ 
\]

converges uniformly as $n\to\infty$; then the function,

\[
\Phi_\lambda(w) = P_\lambda(w) + v(w)\\
\]

Satisfies the Schr\"{o}der equation,

\[
\Phi_\lambda(\lambda w) = f(\Phi_\lambda(w))\\
\]

Where this equation is interpreted in some neighborhood of the point $w=0$; where $\Phi_\lambda(0) = \infty$; can be interpreted at least in some way. This derides a way of constructing a Schr\"{o}der function at $\infty$; again, ignoring much of the local theory of fixed points--when usually constructing Schr\"{o}der functions.

Now, as to the uniqueness of these things--there isn't much to say. This method is intended for very difficult functions like $e^z$; which betray much of the traditional theory. Where, when using $e^z$; and trying to construct a function $e^{A(z)} = A(z+1)$; the Schr\"{o}der method of iteration is undesirable. As, using the Schr\"{o}der function produces a non-real valued function--unless we massage it gratuitously. And the desired result is a real-valued $A(z)$. 

This decries any method of showing a quick uniqueness criterion. As it implies a uniqueness criterion on all real valued Abel functions of $e^z$. And the author cannot think of anything obvious to this. In fact, this is much of an open-theory; discussing a uniqueness of real-valued tetrations.

This method of constructing a Schr\"{o}der function, doesn't violate the uniqueness condition we see in Milnor; which is $\Psi(0) = z_0$ and $\Psi'(0) = 1$; as we do not consider a fixed point $f(z_0) = z_0$. But, because of this, we lose any chance of harmony; there may be many more solutions like the one $\Phi_\lambda$--though there is one for each $0 < |\lambda| < 1$.

With that, we state our culminated theorem on the construction of a Schr\"{o}der function--where everything is right.

\begin{theorem}[An Alternative Schr\"{o}der Function]\label{thmAltSCH}
Suppose $f:\mathbb{C} \to \mathbb{C}$ is an entire function. Assume for large $u \in \mathcal{A}$ that $|f(u)|/|f'(u)| < b$ for some $b \in \mathbb{R}^+$. Let $f(\mathcal{A}) \subset \mathcal{A}$ and $f^{\circ n}(\mathcal{A}) \to \infty$. Assume additionally that,

\[
|f^{-1}(u) - f^{-1}(u')| \le (1+\epsilon)|u-u'|\\
\]

Where $\epsilon \to 0$ as $u,u'\to\infty$ on $\mathcal{A}$. Then, there exists an inverse Schr\"{o}der function $\Phi_\lambda$ for $w \in \mathcal{U} = \{0 < |w| < \delta,\,w\neq-\lambda^{j}\}$ and $0 < |\lambda| < 1$, which sends to $\mathcal{A}$, in which,

\[
\Phi_\lambda(\lambda w) = f(\Phi_\lambda(w))\\
\]
\end{theorem}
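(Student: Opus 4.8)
The plan is to build $\Phi_\lambda$ as the limit of the backward iterates $f^{\circ -n}\!\big(P_\lambda(\lambda^{n}w)\big)$ of the Section~4 mock Schr\"{o}der function, exactly as sketched in the paragraphs preceding the statement. First I would apply Theorem~\ref{thmSCH} (equivalently Theorem~\ref{thmABL}) with $p(w)=\tfrac{1}{w+1}$ to produce
\[
P_\lambda(w)=\OmSum_{j=1}^\infty\frac{f(z)}{\lambda^{-j}w+1}\,\bullet z,\qquad P_\lambda(\lambda w)=\frac{f(P_\lambda(w))}{w+1},
\]
holomorphic on $\mathcal{U}=\{0<|w|<\delta,\ w\neq-\lambda^{j}\}$, and I would fix $\delta$ small enough that $P_\lambda$ carries this punctured disk into $\mathcal{A}$; this is possible because $P_\lambda(w)\to\infty$ as $w\to0$ while $f(\mathcal{A})\subset\mathcal{A}$ and $f^{\circ n}(\mathcal{A})\to\infty$, so that the points $P_\lambda(\lambda^{n}w)$ lie along a forward $f$-orbit (twisted by the convergent factors $(\lambda^{j}w+1)^{-1}$) escaping to $\infty$ inside $\mathcal{A}$. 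On this orbit a single-valued holomorphic branch of $f^{-1}$ is available, and the near-isometry hypothesis $|f^{-1}(u)-f^{-1}(u')|\le(1+\epsilon)|u-u'|$ applies. I then set $v_n(w)=f^{\circ -n}\!\big(P_\lambda(\lambda^{n}w)\big)-P_\lambda(w)$ and the whole proof reduces to showing $v_n\to v$ uniformly on compact subsets of $\mathcal{U}$.

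For this, peel one inverse branch off the telescoped difference: using $P_\lambda(\lambda^{n+1}w)=f(P_\lambda(\lambda^{n}w))/(\lambda^{n}w+1)$,
\[
v_{n+1}(w)-v_n(w)=f^{\circ -n}\!\Big(f^{-1}\big(P_\lambda(\lambda^{n+1}w)\big)\Big)-f^{\circ -n}\!\Big(P_\lambda(\lambda^{n}w)\Big).
\]
The Good Asymptotics Theorem~\ref{thmGA}, in the form pertinent to this $P_\lambda$ (error term $\mathcal{O}\!\big(\tfrac{f(u)}{f'(u)}\,w\big)=\mathcal{O}(b\,w)$), gives $\big|f^{-1}(P_\lambda(\lambda^{n+1}w))-P_\lambda(\lambda^{n}w)\big|\le C_1|\lambda|^{n}|w|$. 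Now iterate the near-isometry bound $n$ times; after $j$ backward steps the two arguments both sit within $\mathcal{O}(1)$ of $P_\lambda(\lambda^{n-j}w)$ (indeed $f^{\circ -j}(P_\lambda(\lambda^{n}w))=P_\lambda(\lambda^{n-j}w)+v_j(\lambda^{n-j}w)$), which tends to $\infty$ as $n-j\to\infty$, so the distortion factor at that step is $1+\epsilon_i$ with $\epsilon_i\to0$ as $i\to\infty$. Hence
\[
|v_{n+1}(w)-v_n(w)|\ \le\ C_1|w|\,|\lambda|^{n}\prod_{i=0}^{n-1}(1+\epsilon_i),\qquad \sum_{i<n}\log(1+\epsilon_i)\le\sum_{i<n}\epsilon_i=o(n),
\]
so $\prod_{i<n}(1+\epsilon_i)=e^{o(n)}$ is swamped by $|\lambda|^{n}$ and $\sum_n\|v_{n+1}-v_n\|<\infty$ on compacta. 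Therefore $v_n\to v$ uniformly and $v$ is holomorphic on $\mathcal{U}$.

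Finally I would put $\Phi_\lambda:=P_\lambda+v$. By construction $\Phi_\lambda(w)=\lim_{n}f^{\circ -n}\!\big(P_\lambda(\lambda^{n}w)\big)$, so applying the entire (hence continuous) map $f$ and re-indexing $m=n-1$,
\[
f(\Phi_\lambda(w))=\lim_{n}f^{\circ -(n-1)}\!\big(P_\lambda(\lambda^{\,n-1}\lambda w)\big)=\lim_{m}f^{\circ -m}\!\big(P_\lambda(\lambda^{m}\lambda w)\big)=\Phi_\lambda(\lambda w),
\]
which is the desired Schr\"{o}der equation; holomorphy of $\Phi_\lambda$ on $\mathcal{U}$ is immediate, and $\Phi_\lambda(\mathcal{U})\subset\mathcal{A}$ since each $f^{\circ -n}(P_\lambda(\lambda^{n}w))$ lies on the backward orbit inside $\mathcal{A}$ and $\mathcal{A}$ contains the limit points of that orbit. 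The main obstacle is the uniform-convergence step: the estimate above is not quite self-contained, because applying the near-isometry hypothesis at each step presupposes that the backward orbit remains in $\mathcal{A}$ and that the intermediate arguments genuinely run off to $\infty$ (so that $\epsilon_i\to0$). This has to be closed by an induction on $n$ that simultaneously controls $\|v_n\|$ and the location of the orbit; it is exactly here that the hypotheses $f(\mathcal{A})\subset\mathcal{A}$, $f^{\circ n}(\mathcal{A})\to\infty$, and $|f(u)|/|f'(u)|<b$ all pull their weight, and where one must be careful to work with a fixed single-valued branch of $f^{-1}$ along the orbit.
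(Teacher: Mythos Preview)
Your overall architecture is the paper's: build $P_\lambda$ from $p(w)=\tfrac{1}{1+w}$, set $v_n(w)=f^{\circ -n}P_\lambda(\lambda^n w)-P_\lambda(w)$, show $v_n\to v$ locally uniformly on $\mathcal{U}$, and put $\Phi_\lambda=P_\lambda+v$. The only genuine difference is how you prove convergence of $v_n$. The paper does not peel off all $n$ inverse branches; instead it first records the one–step recursion
\[
v_{n+1}(w)=f^{-1}\big(P_\lambda(\lambda w)+v_n(\lambda w)\big)-P_\lambda(w),
\]
applies the near-isometry hypothesis once to get $|v_{n+1}(w)-v_n(w)|\le(1+\epsilon)\,|v_n(\lambda w)-v_{n-1}(\lambda w)|$, and then passes to sup-norms on a compact set $\mathcal{B}\subset\mathcal{U}$ invariant under $w\mapsto\lambda w$, obtaining $\|v_{n+1}-v_n\|_{\mathcal{B}}\le(1+\epsilon)q\,\|v_n-v_{n-1}\|_{\mathcal{B}}$ with $|\lambda|<q<1$ and $(1+\epsilon)q<1$ once $\delta$ is small. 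Banach's fixed point theorem then finishes in one line. Your unrolled estimate $|v_{n+1}-v_n|\le C_1|w|\,|\lambda|^n\prod_{i<n}(1+\epsilon_i)$ reaches the same conclusion, but it forces you into the inductive bookkeeping you flag at the end (keeping the backward orbit inside $\mathcal{A}$ and controlling each $\epsilon_i$); the paper's recursion sidesteps that, because a single application of the Lipschitz bound at $P_\lambda(\lambda w)$---already large for $w\in\mathcal{U}$ when $\delta$ is small---suffices, and the geometric decay comes from the $w\mapsto\lambda w$ shift rather than from the inner $|\lambda|^n$. In short: same strategy, but the paper's one–step contraction is tidier and avoids the product $\prod(1+\epsilon_i)$ and the ``$e^{o(n)}$ swamped by $|\lambda|^n$'' step, which in your argument really amounts to choosing $\delta$ so small that every $\epsilon_i$ is below a fixed $\epsilon_0$ with $(1+\epsilon_0)|\lambda|<1$.
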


\begin{proof}
Define the sequence of terms $v_n(w) = f^{\circ -n} P_\lambda(\lambda^n w) - P_{\lambda}(w)$; which are holomorphic for $|w| < \delta$ for $w\neq -\lambda^j$. We see this because the equation $f^{-1}(P_\lambda(\lambda w) + \mathcal{O}(w)) - P_\lambda(w) = \mathcal{O}(w)$--which removes the singularity of $v_n(w)$ at $w =0$. As this asymptotic is satisfied in a neighborhood of $0$. This can be written as the process,

\[
v_{n+1}(w) = f^{-1}(P_\lambda(\lambda w) + v_n(\lambda w)) - P_\lambda(w)\\
\]

Then, by inspection,

\[
|v_{n+1} - v_n| \le (1+\epsilon)|v_n(\lambda w) - v_{n-1}(\lambda w)|\\
\]

There exists $0 < |\lambda| < q < 1$ for each $\delta,\delta'>0$--where $q \to |\lambda|$ as $\delta \to 0$; in which,

\[
||v_{n+1} - v_n||_{\mathcal{B}} \le (1+\epsilon)q||v_n - v_{n-1}||_{\mathcal{B}}\\
\]

For a compact set $\mathcal{B} = \{w \in \mathbb{D}\,|\,|w|\le\delta,\,|w+\lambda^j| \ge \delta'|\lambda|^j\}$. Where, $(1+\epsilon)q < 1$ with an appropriate choice of $\delta>0$. By Banach's Fixed Point Theorem; we have uniform convergence in $\lambda$ and $w$; call $v_n \to v$. The function,

\[
\Phi_\lambda(w) = P_\lambda(w) + v(w)\\
\]

Satisfies,

\[
\Phi_\lambda(\lambda w) = f(\Phi_\lambda(w))\\
\]
\end{proof}

\section{In Conclusion}

We have walked the reader through the framework of our paper \cite{Nix5}. We did so focusing on more adaptable methods of the construction present there. Much of this work is reserved for fast growing entire functions; which have a reasonable divergence at $\infty$. The function $e^z$ is the prototypical example of such a function. But if $f$ diverges fast enough at infinity; such its derivative $f'(u)$ diverges in a faster or similar enough manner to derive $|f(u)|/|f'(u)| < b \in \mathbb{R}^+$--then much of the work is generalizable.

Again, we thank the reader for their time.

\end{document}